\providecommand{\U}[1]{\protect\rule{.1in}{.1in}}
\newtheorem{theorem}{Theorem}
\theoremstyle{plain}
\newtheorem{conjecture}{Conjecture}
\newtheorem{corollary}{Corollary}
\newtheorem{lemma}{Lemma}
\newtheorem{problem}{Problem}
\newtheorem{remark}{Remark}
\numberwithin{equation}{section}
\begin{document}
\title[Positive Series]{Some open problems concerning the convergence of positive series}
\author{Constantin P. Niculescu}
\address{University of Craiova, Department of Mathematics, Craiova RO-200585, Romania}
\email{cniculescu47@yahoo.com}
\author{Gabriel T. Pr\v{a}jitur\v{a}}
\address{Department of Mathematics, The College at Brockport, State University of New
York, 350 New Campus Drive, Brockport, New York 14420-2931, USA}
\email{gprajitu@brockport.edu}
\date{Jan 19, 2012}
\subjclass[2000]{Primary 37A45, 40A30; Secondary 40E05}
\keywords{positive series, set of zero density, convergence in density}

\begin{abstract}
We discuss some old results due to Abel and Olivier concerning the convergence
of positive series and prove a set of necessary conditions involving
convergence in density.

\end{abstract}
\maketitle

\section{Introduction}

Understanding the nature of a series is usually a difficult task. The
following two striking examples can be found in Hardy's book, \emph{Orders of
infinity}: the series%
\[
\sum_{n\geq3}\frac{1}{n\ln n\left(  \ln\ln n\right)  ^{2}}%
\]
converges to 38.43..., but does it so slow that one needs to sum up its first
$10^{3.14\times10^{86}}$ terms to get the first 2 exact decimals of the sum.
In the same time, the series
\[
\sum_{n\geq3}\frac{1}{n\ln n\left(  \ln\ln n\right)  }%
\]
is divergent but its partial sums exceed 10 only after $10^{10^{100}}$ terms.
See \cite{H1910}, pp. 60-61. On page 48 of the same book, Hardy mentions an
interesting result (attributed to De Morgan and Bertrand) about the
convergence of the series of the form%
\begin{equation}
\sum_{n\geq1}\frac{1}{n^{s}}\text{ and }\sum_{n\geq n_{k}}\frac{1}{n\left(
\ln n\right)  \left(  \ln\ln n\right)  \cdots(\underset{k\text{ times}%
}{\underbrace{\ln\ln\cdots\ln n}})^{s}}~, \tag{$MB_k$}\label{serDM}%
\end{equation}
where $k$ is an arbitrarily fixed natural number, $s$ is a real number and
$n_{k}$ is a number large enough to ensure that $\underset{k\text{ times}%
}{\underbrace{\ln\ln\cdots\ln n}}$ is positive. Precisely, such a series is
convergent if $s>1$ and divergent otherwise. This is an easy consequence of
Cauchy's condensation test (see Knopp \cite{Knopp}, p. 122). Another short
argument is provided by Hardy \cite{H} in his \emph{Course of Pure
Mathematics}, on p. 376.

The above discussion makes natural the following problem.

\begin{problem}
What decides if a positive series is convergent or divergent?

Is there any universal convergence test? Is there any pattern in convergence?
\end{problem}

This is an old problem who received a great deal of attention over the years.
Important progress was made during the 19th Century by people like A.-L.
Cauchy, N. H. Abel, C. F. Gauss, A. Pringsheim and Du Bois-Reymond. In the
last fifty years the interest shifted toward combinatorial aspects of
convergence/divergence, although papers containing new tests of convergence
continue to be published. See for example \cite{A2008} and \cite{LTZ}. This
paper's purpose is to discuss the relationship between the convergence of a
positive series and the convergence properties of the summand sequence.

\section{Some history}

We start by recalling an episode from the beginning of Analysis, that marked
the moment when the series of type (\ref{serDM}) entered the attention of
mathematicians. M. Goar \cite{G1999} has written the story in more detail.

In 1827, L. Olivier \cite{Olivier1827} published a paper claiming that the
harmonic series represents a kind of ``boundary" case with which other
potentially convergent series of positive terms could be compared.
Specifically, he asserted that a positive series $\sum a_{n}$ whose terms are
monotone decreasing is convergent if and only if $na_{n}\rightarrow0.$ One
year later, Abel \cite{Abel} disproved this convergence test by considering
the case of the (divergent) positive series $\sum_{n\geq2}\frac{1}{n\ln n}$.
In the same \emph{Note}, Abel noticed two other important facts concerning the
convergence of positive series:

\begin{lemma}
\label{LemmaAbel1}There is no positive function $\varphi$ such that a positive
series $\sum a_{n}$ whose terms are monotone decreasing is convergent if and
only if $\varphi(n)a_{n}\rightarrow0.$ In other words, there is no ``boundary"
positive series.
\end{lemma}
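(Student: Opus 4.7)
My approach is to reduce to a monotone auxiliary function and then split on the convergence of a natural associated series. First, I would replace $\varphi$ by its running maximum $\psi(n)=\max_{1\le m\le n}\varphi(m)$, so that $\psi$ is nondecreasing and $\psi\ge\varphi$ pointwise. The domination $\psi\ge\varphi$ means that $\psi(n)a_n\to 0$ implies $\varphi(n)a_n\to 0$, while at every index $n$ where $\psi$ strictly jumps one has $\varphi(n)=\psi(n)$; I will exploit the two directions against each other. With $\psi$ in hand I would split on whether $\sum 1/\psi(n)$ diverges or converges.

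\emph{Case 1: $\sum 1/\psi(n)=\infty$.} Here I build a divergent positive nonincreasing series $\sum a_n$ with $\varphi(n)a_n\to 0$, which refutes the ``only if'' half of the alleged test. Set $b_n=1/\psi(n)$ and $B_n=b_1+\cdots+b_n$; then $b_n$ is nonincreasing and positive, and $B_n\nearrow\infty$. Define $a_n=b_n/B_n$. Positivity is clear; nonincreasingness follows because $b_n$ is nonincreasing while $B_n$ is increasing; and $\psi(n)a_n=1/B_n\to 0$ forces $\varphi(n)a_n\to 0$. The divergence $\sum a_n=\infty$ is the classical Abel--Dini theorem applied to $\sum b_n$.

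\emph{Case 2: $\sum 1/\psi(n)<\infty$.} Now I refute the ``if'' half. Note that $\psi$ must be unbounded, for otherwise $\psi\le C$ would give $\sum 1/\psi\ge\sum 1/C=\infty$; being nondecreasing and unbounded, $\psi$ strictly increases at infinitely many indices $n$, and at each such $n$ the running maximum is attained at $n$, i.e.\ $\varphi(n)=\psi(n)$. Take $a_n=1/\psi(n)$: positive, nonincreasing, and summable by hypothesis. Yet on the infinite set of jump points, $\varphi(n)a_n=\psi(n)/\psi(n)=1$, so $\varphi(n)a_n\not\to 0$ despite $\sum a_n<\infty$.

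In both cases one obtains a monotone positive series violating one of the two implications of the proposed test, so no such $\varphi$ can exist. The only mildly delicate point is monotonicity: the construction delivers nonincreasing sequences, and if ``monotone decreasing'' is read strictly one can superimpose a small strictly decreasing perturbation such as $2^{-n}/\psi(n)$ without disturbing any of the asymptotic properties. The main conceptual move is the passage from $\varphi$ to its running maximum $\psi$, after which the argument splits cleanly along the Abel--Dini dichotomy for $\sum 1/\psi$.
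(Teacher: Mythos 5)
Your proof is correct, and it is worth noting that the paper itself offers no proof of this lemma at all --- it is simply attributed to Abel's 1828 \emph{Note} --- so yours fills a genuine gap. The argument is essentially Abel's own dichotomy: if $\sum 1/\varphi(n)$ diverges, apply the Abel--Dini construction (which is precisely Lemma \ref{LemmaAbel2} of the paper, so your Case 1 is self-contained relative to results already stated here) to get a divergent monotone series with $\varphi(n)a_n\to 0$; if it converges, the terms $1/\varphi(n)$ themselves give a convergent series with $\varphi(n)a_n\equiv 1$. Your genuine addition is the running-maximum device $\psi(n)=\max_{m\le n}\varphi(m)$, which is exactly what is needed to handle a completely arbitrary positive $\varphi$: it restores the monotonicity of the counterexample sequences $1/\psi(n)$ and $b_n/B_n$, while the observation that $\varphi=\psi$ at the (infinitely many) jump points of $\psi$ preserves the failure of $\varphi(n)a_n\to 0$ in Case 2. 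Your handling of strict versus weak monotonicity via the perturbation $2^{-n}/\psi(n)$ is also sound. The only blemish is terminological: you have the two halves of the biconditional swapped --- a divergent series with $\varphi(n)a_n\to 0$ refutes the ``if'' direction ($\varphi(n)a_n\to 0$ implies convergence), while a convergent series with $\varphi(n)a_n\not\to 0$ refutes the ``only if'' direction --- but since you exhibit a counterexample to one implication in each case, the conclusion stands regardless of the labels.
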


\begin{lemma}
\label{LemmaAbel2}If $\sum a_{n}$ is a divergent positive series, then the
series $%
%TCIMACRO{\dsum }%
%BeginExpansion
{\displaystyle\sum}
%EndExpansion
\left(  \frac{a_{n}}{\sum_{k=1}^{n}a_{k}}\right)  $ is also divergent. As a
consequence, for each divergent positive series there is always another one
which diverges slower.
\end{lemma}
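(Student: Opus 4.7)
The plan is to work with the partial sums $S_n = a_1 + a_2 + \cdots + a_n$, which satisfy $S_n \nearrow \infty$ since $\sum a_n$ is a divergent positive series, and to show that the partial sums of $\sum a_n/S_n$ fail the Cauchy criterion. The key idea is the block estimate
\[
\sum_{n=M+1}^{N} \frac{a_n}{S_n} \;\geq\; \frac{1}{S_N}\sum_{n=M+1}^{N} a_n \;=\; \frac{S_N-S_M}{S_N} \;=\; 1 - \frac{S_M}{S_N},
\]
valid because $S_n \leq S_N$ for $n \leq N$, so each denominator is replaced by a larger one.

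Now fix $M$ arbitrary. Since $S_N\to\infty$, we may choose $N$ large enough that $S_M/S_N < 1/2$, whence the block above exceeds $1/2$. This contradicts the Cauchy criterion for the series $\sum a_n/S_n$, so the series must diverge. I expect this to be the main content of the proof; no delicate estimates are needed, just the monotonicity of $S_n$ together with $S_n\to\infty$.

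For the ``diverges slower'' half of the statement, I would observe that the ratio between the new and old summands is
\[
\frac{a_n/S_n}{a_n} \;=\; \frac{1}{S_n} \;\longrightarrow\; 0,
\]
so the terms of the new divergent series are asymptotically negligible compared to those of the original. Iterating the construction then yields an arbitrarily slow divergent series, which is the sense in which ``for each divergent positive series there is always another one which diverges slower.'' The only conceptual obstacle is formulating ``slower'' precisely, and the ratio $1/S_n\to 0$ above is the natural and standard choice.
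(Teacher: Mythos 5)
Your proof is correct: the block estimate $\sum_{n=M+1}^{N} a_n/S_n \geq 1 - S_M/S_N$ together with $S_N \to \infty$ is precisely the classical (Abel--Dini) argument, and the remark that the term ratio $1/S_n \to 0$ gives the intended sense of ``diverges slower.'' The paper itself states this lemma without proof, citing Abel's 1828 \emph{Note}, so there is nothing to compare against; your argument is the standard one and is complete.
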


A fact which was probably known to Abel (although it is not made explicit in
his \emph{Note}) is that the whole scale of divergent series
\begin{equation}
\sum_{n\geq n_{k}}\frac{1}{n\left(  \ln n\right)  \left(  \ln\ln n\right)
\cdots(\underset{k\text{ times}}{\underbrace{\ln\ln\cdots\ln n}})}\text{\quad
for }k=1,2,3,... \tag{A}\label{Abelscale}%
\end{equation}
comes from the harmonic series $\sum\frac{1}{n},$ by successive application of
Lemma \ref{LemmaAbel2} and the following result on the generalized Euler's constant.

\begin{lemma}
\emph{(}C. Maclaurin and A.-L. Cauchy\emph{)}. If $f$ is positive and strictly
decreasing on $[0,\infty),$ there is a constant $\gamma_{f}\in(0,f(1)]$ and a
sequence $(E_{f}(n))_{n}$ with $0<E_{f}(n)<f(n),$ such that
\begin{equation}
\sum_{k\,=\,1}^{n}\,f(k)=\int_{1}^{n}\,f(x)\,dx+\gamma_{f}+E_{f}(n)
\tag{MC}\label{euler}%
\end{equation}
for all $n.$
\end{lemma}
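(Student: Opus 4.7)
The plan is to introduce the auxiliary sequence
\[
d_n \;=\; \sum_{k=1}^{n} f(k) \;-\; \int_{1}^{n} f(x)\,dx ,
\]
so that we may then set $\gamma_f=\lim d_n$ and $E_f(n)=d_n-\gamma_f$. The whole argument reduces to proving that $(d_n)$ is strictly decreasing and bounded below, and controlling the size of the tail $d_n-\gamma_f$ in terms of $f(n)$.

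First I would compute $d_{n+1}-d_n = f(n+1)-\int_n^{n+1} f(x)\,dx$; since $f$ is strictly decreasing, $f(x)>f(n+1)$ on $(n,n+1)$, hence this difference is strictly negative and $(d_n)$ is strictly decreasing. Next I would consider the shifted sequence $e_n=d_n-f(n)$; a similar one-step computation, namely $e_{n+1}-e_n = f(n)-\int_n^{n+1} f(x)\,dx > 0$, shows that $(e_n)$ is strictly increasing. Since $d_n-e_n=f(n)>0$, the two monotone sequences interlace: $e_n<d_m$ for all $n,m\ge 1$. Therefore both are bounded and convergent; call the limits $\gamma_f$ and $\beta_f$ respectively, and note $\beta_f\le\gamma_f$.

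To match the range stated for $\gamma_f$, I would read off the endpoints from the boundary values. On the upper side, $d_1=f(1)$ and $(d_n)$ strictly decreases, so $\gamma_f<f(1)$ (so in particular $\gamma_f\le f(1)$). On the lower side, $e_1=f(1)-f(1)=0$ and $(e_n)$ strictly increases, so $e_n>0$ for $n\ge 2$ and thus $\gamma_f\ge\beta_f\ge e_2>0$. For the error term, $E_f(n)=d_n-\gamma_f$ is positive because $(d_n)$ strictly decreases to $\gamma_f$, and it satisfies $E_f(n)<f(n)$ because $d_n-f(n)=e_n<\beta_f\le\gamma_f$, i.e.\ $d_n-\gamma_f<f(n)$.

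There is no real obstacle here; the only subtlety is to make sure the monotonicity is strict throughout (which uses strict, not merely weak, monotonicity of $f$) so that the endpoint cases $0$ and $f(1)$ in the required range for $\gamma_f$ and $E_f(n)$ are handled with the correct inequalities. A brief remark on the geometric interpretation, comparing the rectangles of heights $f(k)$ with the area under the graph of $f$ between consecutive integers, will make the two monotonicity estimates transparent.
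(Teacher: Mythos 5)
Your proof is correct and complete: the two one-step monotonicity computations for $d_n$ and $e_n=d_n-f(n)$ are exactly right, and the endpoint checks $d_1=f(1)$, $e_1=0$ together with strict monotonicity give precisely the stated bounds $\gamma_f\in(0,f(1)]$ and $0<E_f(n)<f(n)$. The paper itself does not prove this lemma but refers to Apostol's article for the details, and your argument is the standard one given there (the classical rectangles-versus-area comparison underlying the integral test), so there is nothing further to reconcile.
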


See \cite{Apostol}, Theorem 1, for details.

If $f(n)\rightarrow0$ as $n\rightarrow\infty,$ then $($\ref{euler}$)$ implies
\[
\sum_{k\,=\,1}^{n}\,f(k)-\int_{1}^{n}\,f(x)\,dx\rightarrow\gamma_{f}.
\]

$\gamma_{f}$ is called the \emph{generalized Euler}'\emph{s constant}, the
original corresponding to $f(x)=1/x.$

Coming back to Olivier's test of convergence, we have to mention that the
necessity part survived the scrutiny of Abel and became known as Olivier's Theorem:

\begin{theorem}
\label{ThmO}If $\sum a_{n}$ is a convergent positive series and $(a_{n})_{n}$
is monotone decreasing, then $na_{n}\rightarrow0.$
\end{theorem}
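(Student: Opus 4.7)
The plan is to deduce Theorem \ref{ThmO} from the Cauchy criterion for convergence, exploiting monotonicity to convert a tail sum into a lower bound of the form $n\,a_{2n}$. Since $\sum a_n$ converges, for every $\varepsilon>0$ there exists $N$ such that $\sum_{k=n+1}^{2n} a_k<\varepsilon$ whenever $n\geq N$. The key geometric observation is that the block from index $n+1$ to $2n$ contains exactly $n$ terms, and each of them is at least $a_{2n}$ by the decreasing hypothesis. Therefore
\[
n\,a_{2n}\leq \sum_{k=n+1}^{2n} a_k<\varepsilon,
\]
which immediately yields $2n\,a_{2n}<2\varepsilon$ for all $n\geq N$.

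Next I would pass from even to odd indices to conclude that the whole sequence $(n a_n)$ tends to $0$. Using monotonicity once more, for the odd index $2n+1$ one has $a_{2n+1}\leq a_{2n}$, and hence
\[
(2n+1)\,a_{2n+1}\leq (2n+1)\,a_{2n}=\frac{2n+1}{2n}\cdot(2n\,a_{2n}),
\]
so the odd subsequence is controlled by the even one up to a factor tending to $1$. Combining the two estimates shows $n a_n\to 0$.

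The argument is essentially an application of the Cauchy criterion to a cleverly chosen window of length $n$, so there is no real obstacle; the only mildly delicate point is to avoid a naive ``take $m=2n$ in the Cauchy criterion and conclude'' that overlooks the odd indices, which I handle explicitly via the monotonicity comparison above. Note that the monotonicity hypothesis is essential: without it one cannot replace each $a_k$ in the block by the smaller value $a_{2n}$, and in fact Lemma \ref{LemmaAbel1} (as well as standard counterexamples involving lacunary sequences) shows that the conclusion fails for general convergent positive series.
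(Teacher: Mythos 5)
Your proof is correct, but it takes a genuinely different route from the paper. You give the classical elementary argument: the Cauchy criterion applied to the window $\{n+1,\dots,2n\}$ of length $n$, combined with monotonicity to get $n\,a_{2n}\leq\sum_{k=n+1}^{2n}a_k<\varepsilon$, followed by the standard bridge $(2n+1)a_{2n+1}\leq\frac{2n+1}{2n}\,(2n\,a_{2n})$ to cover odd indices. All steps are valid, and you are right that the odd/even interpolation is the one point that must not be skipped. The paper instead derives Olivier's Theorem as a corollary of the \v{S}al\'{a}t--Toma result (Theorem \ref{ThmST}): since $\sum a_n$ converges, the Ces\`{a}ro means $\frac{1}{n}\sum_{k=1}^{n}ka_k$ tend to $0$, and monotonicity gives the pointwise lower bound
\[
\frac{a_{1}+2a_{2}+\dots+na_{n}}{n}\geq\frac{(1+2+\dots+n)\,a_{n}}{n}=\frac{(n+1)a_{n}}{2},
\]
which forces $na_n\to0$. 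Your approach is more self-contained and needs nothing beyond the Cauchy criterion; the paper's approach is less elementary but situates Olivier's Theorem as the monotone special case of a density-convergence statement that holds for \emph{all} convergent positive series, which is the theme the paper is developing. Both are complete proofs.
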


\begin{remark}
If $\sum a_{n}$ is a convergent positive series and $(na_{n})_{n}$ is monotone
decreasing, then $\left(  n\ln n\right)  a_{n}\rightarrow0$. In fact,
according to the well known estimate of harmonic numbers,%
\[
\sum_{1}^{n}\frac{1}{k}=\log n+\gamma+\frac{1}{2n}-\frac{1}{12n^{2}}%
+\frac{\varepsilon_{n}}{120n^{4}}\text{,}%
\]
where $\varepsilon_{n}\in(0,1),$ we get
\[
\sum_{\lfloor\sqrt{n}\rfloor}^{n}a_{k}=\sum_{\lfloor\sqrt{n}\rfloor}%
^{n}\left(  ka_{k}\right)  \frac{1}{k}\geq na_{n}\sum_{\lfloor\sqrt{n}\rfloor
}^{n}\frac{1}{k}\geq\frac{1}{2}\left(  n\ln n\right)  a_{n}-\frac{1}%
{2(\lfloor\sqrt{n}\rfloor-1)}%
\]
for all $n\geq2.$ Here $\lfloor x\rfloor$ denotes the largest integer that
does not exceeds $x.$
\end{remark}

Simple examples show that the monotonicity condition is vital for Olivier's
Theorem. See the case of the series $\sum a_{n},$ where $a_{n}=\frac{\log
n}{n}$ if $n$ is a square, and $a_{n}=\frac{1}{n^{2}}$ otherwise.

The next result provides an extension of the Olivier's Theorem to the context
of complex numbers.

\begin{theorem}
Suppose that $(a_{n})_{n}$ is a nonincreasing sequence of positive numbers
converging to 0 and $(z_{n})_{n}$ is a sequence of complex numbers such that
the series $\sum a_{n}z_{n}$ is convergent. Then%
\[
\lim_{n\rightarrow\infty}\left(  \sum_{k=1}^{n}z_{k}\right)  a_{n}=0.
\]

\end{theorem}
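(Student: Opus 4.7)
The plan is to prove this by Abel summation, in essentially the same way one derives Kronecker's lemma (of which the statement is the specialization $x_k = a_k z_k$, $b_k = 1/a_k$). Let $T_n = \sum_{k=1}^{n} a_k z_k$, which converges to some $T \in \mathbb{C}$ by hypothesis, and set $\alpha_k = 1/a_{k+1} - 1/a_k$. The assumption that $(a_n)$ is nonincreasing guarantees $\alpha_k \ge 0$, while $a_n \downarrow 0$ makes $(1/a_n)$ strictly increasing to $+\infty$; these two monotonicity facts are exactly what drive the estimate below.

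Applying summation by parts to $\sum_{k=1}^{n} z_k = \sum_{k=1}^{n} (1/a_k)(a_k z_k)$ yields
\[
\sum_{k=1}^{n} z_k = \frac{T_n}{a_n} - \sum_{k=1}^{n-1} T_k\,\alpha_k,
\]
and multiplying by $a_n$ gives the key identity
\[
a_n \sum_{k=1}^{n} z_k = T_n - a_n\sum_{k=1}^{n-1} T_k\,\alpha_k.
\]
The sum $\sum_{k=1}^{n-1}\alpha_k$ telescopes to $1/a_n - 1/a_1$, so $a_n \sum_{k=1}^{n-1}\alpha_k = 1 - a_n/a_1 \to 1$. Hence the right-hand term $a_n\sum_{k=1}^{n-1} T_k\,\alpha_k$ is a Toeplitz/Ces\`aro-type weighted average of $(T_k)$ with nonnegative weights summing asymptotically to $1$, and should inherit the limit $T$ of $(T_k)$. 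Combined with $T_n \to T$, the displayed identity then forces $a_n\sum_{k=1}^{n} z_k \to 0$.

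The only delicate point is the Toeplitz-type convergence $a_n\sum_{k=1}^{n-1} T_k\,\alpha_k \to T$, and this is where I expect the main work to lie. I would handle it by the routine $\varepsilon$-split: given $\varepsilon>0$, pick $N$ with $|T_k - T|<\varepsilon$ for $k\ge N$; the tail is bounded by
\[
\Bigl|a_n\sum_{k=N}^{n-1}(T_k-T)\alpha_k\Bigr| \le \varepsilon\,a_n\sum_{k=N}^{n-1}\alpha_k \le \varepsilon(1 - a_n/a_N) \le \varepsilon,
\]
while the head $a_n\sum_{k=1}^{N-1}(T_k-T)\alpha_k$ is $a_n$ times a constant depending only on $N$, hence tends to $0$ as $n\to\infty$. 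Letting $\varepsilon\to 0$ concludes the proof. The monotonicity hypothesis on $(a_n)$ enters decisively to guarantee $\alpha_k \ge 0$, without which the clean absolute-value bound above would break down; this also clarifies why the counterexample $a_n = (\log n)/n$ on squares, $1/n^2$ elsewhere, mentioned after Theorem~\ref{ThmO}, cannot be repaired.
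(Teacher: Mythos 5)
Your proof is correct and follows essentially the same route as the paper: both arguments write $\sum_{k=1}^{n} z_k = \sum_{k=1}^{n}\frac{1}{a_k}(a_k z_k)$, apply Abel summation, and exploit monotonicity to get nonnegative differences $\frac{1}{a_{k+1}}-\frac{1}{a_k}$ whose telescoping sum, multiplied by $a_n$, stays bounded. The only (cosmetic) difference is that you subtract the limit $T$ and invoke Toeplitz-type regularity, whereas the paper truncates at an index $m$ given by the Cauchy criterion and estimates the tail partial sums directly.
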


\begin{proof}
Since the series $\sum a_{n}z_{n}$ is convergent, one may choose a natural
number $m>0$ such that%
\[
\left\vert \sum_{k=m+1}^{n}a_{k}z_{k}\right\vert <\frac{\varepsilon}{4}%
\]
for every $n>m.$ We will estimate $a_{n}\left(  z_{m+1}+\cdots+z_{n}\right)  $
by using Abel's identity. In fact, letting%
\[
S_{n}=a_{m+1}z_{m+1}+\cdots+a_{n}z_{n},
\]

we get%
\begin{multline*}
\left\vert a_{n}\left(  z_{m+1}+\cdots+z_{n}\right)  \right\vert
=a_{n}\left\vert \frac{1}{a_{m+1}}a_{m+1}z_{m+1}+\cdots+\frac{1}{a_{n}}%
a_{n}z_{n}\right\vert \\
=a_{n}\left\vert \frac{1}{a_{m+1}}S_{m+1}+\frac{1}{a_{m+2}}\left(
S_{m+2}-S_{m+1}\right)  +\cdots+\frac{1}{a_{n}}\left(  S_{n}-S_{n-1}\right)
\right\vert \\
=a_{n}\left\vert \left(  \frac{1}{a_{m+1}}-\frac{1}{a_{m+2}}\right)
S_{m+1}+\cdots+\left(  \frac{1}{a_{n-1}}-\frac{1}{a_{n}}\right)  S_{n-1}%
+\frac{1}{a_{n}}S_{n}\right\vert \\
\leq\frac{\varepsilon a_{n}}{4}\left(  \left(  \frac{1}{a_{m+2}}-\frac
{1}{a_{m+1}}\right)  +\cdots+\left(  \frac{1}{a_{n}}-\frac{1}{a_{n-1}}\right)
+\frac{1}{a_{n}}\right) \\
=\frac{\varepsilon a_{n}}{4}\left(  \frac{2}{a_{n}}-\frac{1}{a_{n+1}}\right)
<\frac{\varepsilon}{2}.
\end{multline*}

Since $\lim_{n\rightarrow\infty}a_{n}=0,$ one may choose an index
$N(\varepsilon)>m$ such that%
\[
\left\vert a_{n}\left(  z_{1}+\cdots+z_{m}\right)  \right\vert <\frac
{\varepsilon}{2}%
\]
for every $n>N(\varepsilon)$ and thus%
\[
\left\vert a_{n}\left(  z_{1}+\cdots+z_{n}\right)  \right\vert \leq\left\vert
a_{n}\left(  z_{1}+\cdots+z_{m}\right)  \right\vert +\left\vert a_{n}\left(
z_{m+1}+\cdots+z_{n}\right)  \right\vert <\varepsilon
\]
for every $n>N(\varepsilon).$
\end{proof}

In 2003, T. \v{S}al\'{a}t and V. Toma \cite{ST} made the important remark that
the monotoni-\allowbreak\-city condition in Theorem \ref{ThmO} can be dropped
if the convergence of $(na_{n})_{n}$ is weakened:

\begin{theorem}
\label{ThmST}If $\sum a_{n}$ is a convergent positive series, then
$na_{n}\rightarrow0$ in density.
\end{theorem}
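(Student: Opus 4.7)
Fix $\varepsilon > 0$ and set $A_\varepsilon = \{n \in \mathbb{N} : na_n \geq \varepsilon\}$. The goal is to show that $|A_\varepsilon \cap [1,N]|/N \to 0$ as $N \to \infty$.

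The guiding idea is that for each $n \in A_\varepsilon$ the term $a_n$ is at least $\varepsilon/n$, so a cluster of ``many'' indices of $A_\varepsilon$ inside an interval $[M, 2M]$ would force $\sum_{n=M}^{2M} a_n$ to be large, contradicting the Cauchy criterion for convergence of $\sum a_n$. Concretely, since $a_n \geq \varepsilon/(2M)$ for every $n \in A_\varepsilon \cap [M, 2M]$,
\[
\sum_{n=M}^{2M} a_n \;\geq\; \frac{\varepsilon}{2M}\,\bigl|A_\varepsilon \cap [M, 2M]\bigr|.
\]
Given any $\eta > 0$, I would choose $M_0$ so that $\sum_{n \geq M_0} a_n < \eta\varepsilon/2$. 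The previous inequality then yields the uniform window bound $|A_\varepsilon \cap [M, 2M]| \leq M\eta$ for every $M \geq M_0$.

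To upgrade this ``sliding window'' bound to a statement about the initial segment $[1,N]$, the plan is to use a dyadic decomposition
\[
[1, N] \subset [1, M_0] \cup [M_0, 2M_0] \cup [2M_0, 4M_0] \cup \cdots \cup [2^{k-1}M_0, 2^k M_0],
\]
with $k$ minimal such that $2^k M_0 \geq N$, and apply the window bound to each piece. The resulting geometric sum telescopes:
\[
\bigl|A_\varepsilon \cap [1, N]\bigr| \;\leq\; M_0 + \eta M_0\,(1 + 2 + \cdots + 2^{k-1}) \;\leq\; M_0 + 2\eta N.
\]
Dividing by $N$, letting $N \to \infty$, and finally sending $\eta \downarrow 0$ then forces $|A_\varepsilon \cap [1,N]|/N \to 0$, as required.

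The main technical point is the dyadic bookkeeping in the last step: the window bound carries a factor proportional to the window length $M$, so a careless summation over a long chain of windows could blow up. What rescues the argument is that the window lengths grow geometrically, so their total contribution is only a constant multiple of $N\eta$, with $\eta$ at our disposal. (An alternative packaging is to first observe that $\sum_{n \in A_\varepsilon} 1/n \leq \varepsilon^{-1}\sum a_n < \infty$ and then invoke the classical fact that any subset of $\mathbb{N}$ with finite reciprocal sum has natural density zero; the proof of that fact is essentially the same dyadic argument.)
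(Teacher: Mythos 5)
Your argument is correct, but it is not the route the paper takes. The paper deduces Theorem \ref{ThmST} from the Koopman--von Neumann result (Theorem \ref{ThmKvN}): writing $S_{n}=\sum_{k=1}^{n}a_{k}$ and using Ces\`{a}ro's theorem, one gets
\[
\frac{a_{1}+2a_{2}+\cdots+na_{n}}{n}=S_{n}-\frac{S_{1}+\cdots+S_{n-1}}{n}\rightarrow0,
\]
and then invokes the implication ``Ces\`{a}ro averages of a nonnegative sequence tend to $0$ $\Rightarrow$ convergence in density to $0$'' applied to the sequence $(na_{n})_{n}$. Your proof instead works directly with the exceptional set $A_{\varepsilon}$: the bound $a_{n}\geq\varepsilon/(2M)$ on a window $[M,2M]$, the Cauchy criterion to make the tail small, and a dyadic summation whose geometric growth of window lengths keeps the total at $O(\eta N)$. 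Both are sound; the bookkeeping in your last step is right, since $2^{k}M_{0}<2N$ by minimality of $k$. What the paper's route buys is brevity and a conceptual link to Tauberian theory (the KvN lemma is stated and proved anyway and is the organizing principle of the section); what yours buys is self-containedness -- only the Cauchy criterion is used -- and, more interestingly, it anticipates the paper's own later method: your parenthetical reformulation, namely that $\sum_{n\in A_{\varepsilon}}1/n\leq\varepsilon^{-1}\sum a_{n}<\infty$ together with ``a set with convergent reciprocal sum has zero density,'' is precisely the mechanism behind Lemma \ref{LemmaUppDens} and the proof of the generalization in Theorem \ref{STgen}, where the Ces\`{a}ro/KvN argument no longer applies.
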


In order to explain the terminology, recall that a subset $A$ of $\mathbb{N}$
has \emph{zero density} if%
\[
d(A)=\underset{n\rightarrow\infty}{\lim}\frac{\left\vert A\cap\{1,\ldots
,n\}\right\vert }{n}=0,
\]
\emph{positive lower density} if%
\[
\underline{d}(A)=\underset{n\rightarrow\infty}{\lim\inf}\frac{\left\vert
A\cap\{1,\ldots,n\}\right\vert }{n}>0,
\]
and \emph{positive upper density} if%
\[
\bar{d}(A)=\underset{n\rightarrow\infty}{\lim\inf}\frac{\left\vert
A\cap\{1,\ldots,n\}\right\vert }{n}>0.
\]

Here $\left\vert \cdot\right\vert $ stands for cardinality.

We say that a sequence $(x_{n})_{n}$ of real numbers \emph{converges in
density} to a number $x$ (denoted by $(d)$-$\lim_{n\rightarrow\infty}x_{n}=x)$
if for every $\varepsilon>0$ the set $A(\varepsilon)=\left\{  n:\left\vert
x_{n}-x\right\vert \geq\varepsilon\right\}  $ has zero density. Notice that
$(d)-\lim_{n\rightarrow\infty}x_{n}=x$ if and only if there is a subset $J$ of
$\mathbb{N}$ of zero density such that%
\[
\lim_{\substack{n\rightarrow\infty\\n\notin J}}a_{n}=0.
\]
This notion can be traced back to B. O. Koopman and J. von Neumann
(\cite{KN1932}, pp. 258-259), who proved the integral counterpart of the
following result:

\begin{theorem}
\label{ThmKvN}For every sequence of nonnegative numbers,%
\[
\lim_{n\rightarrow\infty}\frac{1}{n}\sum_{k=1}^{n}a_{k}=0\Rightarrow
(d)\text{-}\lim_{n\rightarrow\infty}a_{n}=0.
\]
The converse works under additional hypotheses, for example, for bounded sequences.
\end{theorem}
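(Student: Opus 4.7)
The plan is to handle the two implications separately, each by a short elementary argument that uses only the nonnegativity of the $a_k$'s.

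First I would establish the forward implication by a Chebyshev-type estimate. Given $\varepsilon>0$, I set $A(\varepsilon)=\{k:a_k\geq\varepsilon\}$. Because the terms are nonnegative,
\[
\sum_{k=1}^{n} a_k \;\geq\; \varepsilon\,\bigl|A(\varepsilon)\cap\{1,\ldots,n\}\bigr|,
\]
so dividing by $n$ gives $|A(\varepsilon)\cap\{1,\ldots,n\}|/n\leq\varepsilon^{-1}\cdot\tfrac{1}{n}\sum_{k=1}^{n}a_k\to 0$. Hence $d(A(\varepsilon))=0$ for every $\varepsilon>0$, which is exactly $(d)$-$\lim_{n\to\infty}a_n=0$.

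For the converse, I would assume $0\leq a_n\leq M$ together with $(d)$-$\lim_{n\to\infty}a_n=0$ and, for a fixed $\varepsilon>0$, split the Ces\`aro average as
\[
\frac{1}{n}\sum_{k=1}^{n} a_k \;=\; \frac{1}{n}\sum_{\substack{k\leq n\\ a_k\geq\varepsilon}} a_k \;+\; \frac{1}{n}\sum_{\substack{k\leq n\\ a_k<\varepsilon}} a_k.
\]
The second piece is trivially at most $\varepsilon$, and the first piece is bounded by $M\cdot|A(\varepsilon)\cap\{1,\ldots,n\}|/n$, which tends to $0$ since $d(A(\varepsilon))=0$. Thus $\limsup_n\tfrac{1}{n}\sum_{k=1}^n a_k\leq\varepsilon$, and letting $\varepsilon\to 0$ closes the argument.

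The only delicate point --- and the reason the converse requires an additional hypothesis --- is that without a uniform bound a sparse sequence of spikes on a zero-density set can carry all the mass and force the Ces\`aro average to diverge even while $a_n\to 0$ in density. The boundedness assumption is used precisely to kill the first piece of the decomposition above, and any growth condition on the $a_k$'s that out-paces the speed at which $|A(\varepsilon)\cap\{1,\ldots,n\}|/n\to 0$ would serve the same purpose.
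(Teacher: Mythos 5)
Your proof is correct and follows essentially the same route as the paper: a Chebyshev--Markov estimate on $|A(\varepsilon)\cap\{1,\ldots,n\}|$ for the forward implication, and the same two-piece decomposition of the Ces\`aro average (terms below $\varepsilon$ versus terms on the zero-density exceptional set, the latter controlled by the uniform bound $M$) for the bounded converse. The closing remark about unbounded spikes on a sparse set is a nice addition but not a deviation in method.
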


\begin{proof}
Assuming $\lim_{n\rightarrow\infty}\frac{1}{n}\sum_{k=1}^{n}a_{k}=0,$ we
associate to each $\varepsilon>0$ the set $A_{\varepsilon}=\left\{
n\in\mathbb{N}:a_{n}\geq\varepsilon\right\}  .$ Since%
\begin{align*}
\frac{\left\vert \{1,...,n\}\cap A_{\varepsilon}\right\vert }{n}  &  \leq
\frac{1}{n}\sum_{k=1}^{n}\frac{a_{k}}{\varepsilon}\\
&  \leq\frac{1}{\varepsilon n}\sum_{k=1}^{n}a_{k}\rightarrow0\text{\ as
}n\rightarrow\infty,
\end{align*}
we infer that each of the sets $A_{\varepsilon}$ has zero density. Therefore
$(d)$-$\lim_{n\rightarrow\infty}a_{n}=0.$

Suppose now that $(a_{n})_{n}$ is bounded and $(d)$-$\lim_{n\rightarrow\infty
}a_{n}=0.$ Then for every $\varepsilon>0$ there is a set $J$ of zero density
outside which $a_{n}<\varepsilon.$ Since
\begin{align*}
\frac{1}{n}\sum_{k=1}^{n}a_{k}  &  =\frac{1}{n}\sum_{k\in\{1,...,n\}\cap
J}a_{k}+\frac{1}{n}\sum_{k\in\{1,...,n\}\backslash J}a_{k}\\
&  \leq\frac{\left\vert \{1,...,n\}\cap J\right\vert }{n}\cdot\sup
_{k\in\mathbb{N}}a_{k}+\varepsilon
\end{align*}
and $\lim_{n\rightarrow\infty}\frac{\left\vert \{1,...,n\}\cap J\right\vert
}{n}=0$, we conclude that $\lim_{n\rightarrow\infty}\frac{1}{n}\sum_{k=1}%
^{n}a_{k}=0.$
\end{proof}

\begin{remark}
Theorem \ref{ThmKvN} is related to the Tauberian theory, whose aim is to
provide converses to the well known fact that for any sequence of complex
numbers,%
\[
\lim_{n\rightarrow\infty}z_{n}=z\Rightarrow\lim_{n\rightarrow\infty}\frac
{1}{n}\sum_{k=1}^{n}z_{k}=z.
\]
Recall here the famous Hardy-Littlewood Tauberian theorem: If $\left\vert
z_{n}-z_{n-1}\right\vert =O\left(  1/n\right)  $ and%
\[
\lim_{n\rightarrow\infty}\frac{1}{n}\sum_{k=1}^{n}z_{k}=z,
\]
then $\lim_{n\rightarrow\infty}z_{n}=z.$ See \cite{HL1913}, Theorem 28.
\end{remark}

The aforementioned result of \v{S}al\'{a}t and Toma is actually an easy
consequence of Theorem \ref{ThmKvN}. Indeed, if $\sum a_{n}$ is a convergent
positive series, then its partial sums $S_{n}=\sum_{k=1}^{n}a_{k}$ constitutes
a convergent sequence with limit $S$. By Ces\`{a}ro's Theorem,%
\[
\lim_{n\rightarrow\infty}\frac{S_{1}+\cdots+S_{n-1}}{n}=S,
\]
whence%
\[
\lim_{n\rightarrow\infty}\frac{a_{1}+2a_{2}+\cdots+na_{n}}{n}=\lim
_{n\rightarrow\infty}\left(  S_{n}-\frac{S_{1}+\cdots+S_{n-1}}{n}\right)  =0.
\]
According to Theorem \ref{ThmKvN}, this fact is equivalent to the convergence
in density of $(na_{n})_{n}$ to 0.

In turn, the result of \v{S}al\'{a}t and Toma implies Olivier's Theorem.
Indeed, if the sequence $(a_{n})$ is decreasing, then
\[
\frac{a_{1}+2a_{2}+\dots+na_{n}}{n}\geq\frac{(1+2+\dots+n)a_{n}}{n}%
=\frac{(n+1)a_{n}}{2}%
\]
which implies that if
\[
\lim_{n\rightarrow\infty}\frac{a_{1}+2a_{2}+\cdots+na_{n}}{n}=0
\]
then $\lim_{n}na_{n}=0.$

If $\sum a_{n}$ is a convergent positive series, then so is $\sum
a_{\varphi(n)},$ whenever $\varphi:\mathbb{N\rightarrow N}$ is a bijective
map. This implies that $na_{\varphi(n)}\rightarrow0$ in density (a conclusion
that doesn't work for usual convergence).

The monograph of H. Furstenberg \cite{Fu} outlines the importance of
convergence in density in ergodic theory. In connection to series summation,
the concept of convergence in density was rediscovered (under the name of
statistical convergence) by Steinhaus \cite{St} and Fast \cite{F} (who
mentioned also the first edition of Zygmund's monograph \cite{Z}, published in
Warsaw in 1935). Apparently unaware of the Koopman-von Neumann result,
\v{S}al\'{a}t and Toma referred to these authors for the roots of convergence
in density.

At present there is a large literature about this concept and its many
applications. We only mention here the recent papers by M. Burgin and O. Duman
\cite{BD} and P. Ther\'{a}n \cite{T}.

\section{An extension of \v{S}al\'{a}t - Toma Theorem}

In this section we will turn our attention toward a generalization of the
result of \v{S}al\'{a}t and Toma mentioned above. This generalization involves
the concepts of convergence in density and convergence in lower density. A
sequence $(x_{n})_{n}$ of real numbers \emph{converges in lower density} to a
number $x$ (abbreviated, $(\underline{d})$-$\lim_{n\rightarrow\infty}x_{n}=x)$
if for every $\varepsilon>0$ the set $A(\varepsilon)=\left\{  n:\left\vert
x_{n}-x\right\vert \geq\varepsilon\right\}  $ has zero lower density.

\begin{theorem}
\label{STgen}Assume that $\sum a_{n}$ is a convergent positive series and
$(b_{n})_{n}$ is a nondecreasing sequence of positive numbers such that
$\sum_{n=1}^{\infty}\frac{1}{b_{n}}=\infty.$ Then
\[
(\underline{d})\text{-}\lim_{n\rightarrow\infty}a_{n}b_{n}=0,
\]
and this conclusion can be improved to%
\[
(d)\text{-}\lim_{n\rightarrow\infty}a_{n}b_{n}=0,
\]
provided that $\inf_{n}\frac{n}{b_{n}}>0$.
\end{theorem}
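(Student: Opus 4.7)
The plan is to treat the two conclusions separately: the second one will be soft once the \v{S}al\'{a}t-Toma theorem is in the background, while the first one requires a combinatorial divergence lemma that I expect to be the real obstacle. In both cases I would anchor the argument at the level set
\[
A_\varepsilon = \{n \in \mathbb{N} : a_n b_n \geq \varepsilon\},\qquad \varepsilon > 0,
\]
on which the pointwise inequality $a_n \geq \varepsilon/b_n$ combined with $\sum a_n < \infty$ yields
\[
\sum_{n \in A_\varepsilon} \frac{1}{b_n} \leq \frac{1}{\varepsilon}\sum_{n \in A_\varepsilon} a_n < \infty.
\]

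For the lower-density statement, the task reduces to the following combinatorial claim, which is the heart of the proof: \emph{if $(b_n)$ is nondecreasing with $\sum 1/b_n = \infty$ and $B \subset \mathbb{N}$ satisfies $\underline{d}(B) > 0$, then $\sum_{n \in B} 1/b_n = \infty$.} Applied with $B = A_\varepsilon$ this would contradict the display above, forcing $\underline{d}(A_\varepsilon) = 0$. To prove the claim, enumerate $B = \{m_1 < m_2 < \cdots\}$ and pick $\delta \in (0,\underline{d}(B))$. The bound $|B \cap \{1,\ldots,n\}| \geq \delta n$, valid for all large $n$, read at $n = m_j$ gives $m_j \leq j/\delta$ eventually. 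Fixing an integer $N \geq 1/\delta$, monotonicity of $(b_n)$ yields $b_{m_j} \leq b_{jN}$, hence
\[
\sum_{j}\frac{1}{b_{m_j}} \geq \sum_{j \geq j_0} \frac{1}{b_{jN}},
\]
and the divergence of the right-hand side is a routine block comparison: grouping $\sum 1/b_n$ in consecutive blocks of length $N$ and using monotonicity once more bounds each block by $N/b_{jN+1}$, so $\sum_j 1/b_{jN+1}$ and therefore $\sum_j 1/b_{jN}$ must be infinite. This step, though elementary, is where the hypotheses on $(b_n)$ are really used, and I expect writing it cleanly to be the main obstacle.

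The density statement, under the extra assumption $\alpha := \inf_n n/b_n > 0$, is then almost immediate: $b_n \leq n/\alpha$ forces
\[
A_\varepsilon \subset \{n : n a_n \geq \alpha \varepsilon\},
\]
and Theorem \ref{ThmST} identifies the larger set as one of density zero, so $d(A_\varepsilon) = 0$. Equivalently, one may note that $\frac{1}{n}\sum_{k=1}^n a_k b_k \leq \frac{1}{\alpha n}\sum_{k=1}^n k a_k \to 0$ by the Ces\`{a}ro computation already used to deduce Theorem \ref{ThmST}, and then apply Theorem \ref{ThmKvN} directly to the nonnegative sequence $(a_n b_n)$.
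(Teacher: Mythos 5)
Your proof is correct, and for the main (lower-density) part it is essentially the paper's argument: the same level set $S_\varepsilon=\{n: a_nb_n\ge\varepsilon\}$, the same bound $\sum_{n\in S_\varepsilon}1/b_n\le\varepsilon^{-1}\sum_{n\in S_\varepsilon}a_n<\infty$, and your combinatorial claim is exactly the paper's Lemma \ref{Lemtech} (with $c_n=1/b_n$), proved by the same device of comparing the $j$th element of $B$ with $jN$ and a block-of-length-$N$ grouping. The only genuine divergence is in the second conclusion: the paper deduces $\sum_{n\in S_\varepsilon}1/n<\infty$ from $1/b_n\ge\alpha/n$ and then invokes its Lemma \ref{LemmaUppDens} (itself proved via Olivier's theorem applied to the subsequence) to conclude $d(S_\varepsilon)=0$, whereas you short-circuit this by the inclusion $A_\varepsilon\subset\{n: na_n\ge\alpha\varepsilon\}$ together with Theorem \ref{ThmST} (or, equivalently, the Ces\`aro estimate plus Theorem \ref{ThmKvN}). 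Your route is slightly more economical since it reuses a result already established rather than a separate lemma, while the paper's route keeps Lemma \ref{LemmaUppDens} in play because it is reused later (e.g.\ in Corollary \ref{Corrdens}); both are valid.
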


An immediate consequence is the following result about the speed of
convergence to 0 of the general term of a convergent series of positive numbers.

\begin{corollary}
If $\sum a_{n}$ is a convergent series of positive numbers, then for each
$k\in\mathbb{N},$%
\begin{equation}
(\underline{d})\,\text{-\thinspace}\lim_{n\rightarrow\infty}\left[  n\left(
\ln n\right)  \left(  \ln\ln n\right)  \cdots(\underset{k\text{ times}%
}{\underbrace{\ln\ln\cdots\ln n}})a_{n}\right]  =0. \tag{$\underline{D}%
_k$}\label{Dk}%
\end{equation}

\end{corollary}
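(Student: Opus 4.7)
The plan is to invoke Theorem \ref{STgen} directly, with the choice
\[
b_n \;=\; n\,(\ln n)\,(\ln\ln n)\cdots\bigl(\underbrace{\ln\ln\cdots\ln n}_{k\text{ times}}\bigr)
\]
for $n\geq n_k$, and $b_n=b_{n_k}$ for the finitely many indices $n<n_k$ (where $n_k$ is chosen, as in the introduction, large enough that the $k$-fold iterated logarithm is positive). So the whole task is to check that this $(b_n)_n$ meets the three hypotheses of Theorem \ref{STgen}: positive, nondecreasing, and with divergent reciprocal sum.

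First I would observe positivity and monotonicity. For $n\geq n_k$ each factor $n,\ln n,\ln\ln n,\ldots,\ln^{(k)} n$ is positive and nondecreasing in $n$, so their product $b_n$ is also positive and nondecreasing; the constant extension to $n<n_k$ only preserves monotonicity (and harmlessly adjusts finitely many terms). Next, to check $\sum 1/b_n=\infty$, I would appeal to the De Morgan--Bertrand scale (\ref{serDM}): taking $s=1$ there gives precisely
\[
\sum_{n\geq n_k}\frac{1}{n(\ln n)(\ln\ln n)\cdots(\underbrace{\ln\ln\cdots\ln n}_{k\text{ times}})} \;=\;\infty,
\]
which is the Abel scale (\ref{Abelscale}) already recorded in Section 2. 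Modifying finitely many terms does not affect divergence.

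With the three hypotheses verified, Theorem \ref{STgen} applies and yields
\[
(\underline{d})\text{-}\lim_{n\rightarrow\infty} a_n b_n \;=\; 0,
\]
which is exactly (\ref{Dk}).

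No real obstacle: the corollary is a pure specialization, and the only nontrivial input is the divergence of the Abel scale, which is already cited from the introduction. The one small bookkeeping point to handle cleanly is the definition of $b_n$ for small $n$ (where the iterated logarithm is not yet defined or positive), but this is immaterial for both the monotonicity and the density conclusion.
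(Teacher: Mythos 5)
Your proof is correct and is exactly the argument the paper intends: the corollary is stated as an immediate consequence of Theorem \ref{STgen} with $b_n$ equal to the $k$-fold iterated-logarithm product, whose reciprocal sum diverges by the De Morgan--Bertrand/Abel scale with $s=1$. Your extra care in extending $b_n$ constantly over the finitely many initial indices is a harmless and sensible bit of bookkeeping that the paper leaves implicit.
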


The proof of Theorem \ref{STgen} is based on two technical lemmas:

\begin{lemma}
\label{Lemtech}Suppose that $(c_{n})_{n}$ is a nonincreasing sequence of
positive numbers such that $\sum_{n=1}^{\infty}c_{n}=\infty$ and $S$ is a set
of positive integers with positive lower density. Then the series $\sum_{n\in
S}c_{n}$ is also divergent.
\end{lemma}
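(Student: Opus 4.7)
The plan is to reduce the claim to the divergence of $\sum c_n$ itself by a double application of Abel summation. Set $f(n)=|S\cap\{1,\dots,n\}|$ and $T_N=\sum_{k\in S,\,k\le N}c_k=\sum_{k=1}^{N}\chi_S(k)\,c_k$. A first summation by parts, using the nonincreasing character of $(c_k)$, rewrites this as
\[
T_N=f(N)c_N+\sum_{k=1}^{N-1}f(k)\bigl(c_k-c_{k+1}\bigr),
\]
where every term is nonnegative because $c_k-c_{k+1}\ge 0$.

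Next I would exploit the hypothesis $\underline d(S)>0$: choose $\alpha\in(0,\underline d(S))$ and an index $K$ such that $f(n)\ge \alpha n$ for all $n\ge K$. Discarding the first $K-1$ (nonnegative) terms and using this linear lower bound on $f$ yields
\[
T_N\ge \alpha N c_N+\alpha\sum_{k=K}^{N-1}k\,(c_k-c_{k+1}).
\]
A second, reverse Abel summation on the last sum — essentially $\sum_{k=K}^{N-1}k(c_k-c_{k+1})=Kc_K+\sum_{k=K+1}^{N-1}c_k-(N-1)c_N$ — makes the $Nc_N$ and $(N-1)c_N$ terms cancel up to the harmless $\alpha c_N$, leaving
\[
T_N\ge \alpha\sum_{k=K+1}^{N-1}c_k.
\]
Since $\sum c_k$ diverges, the right side tends to $\infty$, hence so does $T_N$, proving divergence of $\sum_{n\in S}c_n$.

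The main obstacle is simply bookkeeping the two summations by parts so that the $c_N$-coefficients cancel; nothing deeper than telescoping is required. The nonincreasing assumption on $(c_n)$ is used crucially twice, to ensure that $c_k-c_{k+1}\ge 0$ (so truncating the sum at $k=K$ only decreases it) and to keep the bound $f(k)\ge\alpha k$ effective after being multiplied by $c_k-c_{k+1}$. If $(c_n)$ were merely positive and divergent this chain of inequalities would fail, which is consistent with classical examples where $\sum_{n\in S}c_n$ can converge even though $S$ has full density.
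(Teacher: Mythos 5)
Your argument is correct. Both Abel summations check out: with $f(0)=0$ one has $T_N=\sum_{k=1}^{N}(f(k)-f(k-1))c_k=f(N)c_N+\sum_{k=1}^{N-1}f(k)(c_k-c_{k+1})$, all terms nonnegative by monotonicity; dropping the indices $k<K$, inserting $f(k)\ge\alpha k$ (valid since $\underline d(S)>\alpha$ forces $f(n)\ge\alpha n$ for all large $n$), and telescoping back indeed leaves $T_N\ge\alpha c_N+\alpha Kc_K+\alpha\sum_{k=K+1}^{N-1}c_k\ge\alpha\sum_{k=K+1}^{N-1}c_k\to\infty$. This is, however, a genuinely different route from the paper's. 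The paper never touches summation by parts: it picks an integer $p$ with $\left\vert S\cap\{1,\dots,n\}\right\vert>n/p$ eventually, deduces that the $k$-th element $n_k$ of $S$ satisfies $n_k\le kp$ for large $k$, and then uses monotonicity in a Cauchy-condensation style grouping, $\sum_k c_{kp}\ge\frac1p\sum_{j\ge p}c_j=\infty$, to compare $\sum_{n\in S}c_n\ge\sum_k c_{kp}$ directly with the full series. The paper's argument is shorter and makes the combinatorial content (density controls the gaps between consecutive elements of $S$) explicit, at the price of handling the enumeration of $S$; yours trades that for two partial summations and a cleaner quantitative lower bound $T_N\ge\alpha\sum_{k=K+1}^{N-1}c_k$, which in fact shows the subseries diverges at least at a fixed positive fraction of the rate of the whole series. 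Both use the monotonicity of $(c_n)$ in an essential way, as they must.
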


\begin{proof}
By our hypothesis there are positive integers $p$ and $N$ such that%
\[
\frac{\left\vert S\cap\{1,...,n\}\right\vert }{n}>\frac{1}{p}%
\]
whenever $n\geq N$. Then $\left\vert S\cap\{1,...,kp\}\right\vert >k$ for
every $k\geq N/p,$ which yields
\begin{align*}
\sum_{n\in S}c_{n}  &  =\sum_{k=1}^{\infty}c_{n_{k}}\geq\sum_{k=1}^{\infty
}c_{kp}\\
&  =\frac{1}{p}\sum_{k=1}^{\infty}pc_{kp}\\
&  \geq\frac{1}{p}\left[  \left(  c_{p}+\cdots+c_{2p-1}\right)  +\left(
c_{2p}+\cdots+c_{3p-1}\right)  +\cdots\right] \\
&  =\frac{1}{p}\sum_{k=p}^{\infty}c_{k}=\infty.
\end{align*}

\end{proof}

Our second lemma shows that a subseries $\sum_{n\in S}\frac{1}{n}$ of the
harmonic series is divergent whenever $S$ is a set of positive integers with
positive upper density.

\begin{lemma}
\label{LemmaUppDens} If $S$ is an infinite set of positive integers and
$(a_{n})_{n\in S}$ is a nonincreasing positive sequence such that $\sum_{n\in
S}a_{n}<\infty$ and $\inf\left\{  nc_{n}:n\in S\right\}  =\alpha>0,$ then $S$
has zero density.
\end{lemma}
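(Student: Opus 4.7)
My plan is to reduce the problem to Olivier's Theorem (Theorem \ref{ThmO}) by enumerating $S$ and reading the hypotheses as statements about the resulting subsequence. (I am also reading the ``$c_n$'' in the displayed hypothesis as a typo for ``$a_n$'', so that $\inf\{na_n:n\in S\}=\alpha>0$.)

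Write $S=\{n_{1}<n_{2}<\dots\}$ and set $b_{k}=a_{n_{k}}$. Then $(b_{k})_{k\ge 1}$ is a nonincreasing positive sequence (because $(a_{n})_{n\in S}$ is nonincreasing along $S$) and $\sum_{k=1}^{\infty}b_{k}=\sum_{n\in S}a_{n}<\infty$. Applying Olivier's Theorem to $(b_{k})$ gives $kb_{k}\to 0$, i.e.
\[
k\,a_{n_{k}}\longrightarrow 0\quad\text{as }k\to\infty.
\]
The hypothesis $\inf\{na_{n}:n\in S\}=\alpha>0$ rewrites as $a_{n_{k}}\ge \alpha/n_{k}$, hence
\[
\frac{k}{n_{k}}\le\frac{k\,a_{n_{k}}}{\alpha}\longrightarrow 0.
\]

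It then remains to convert $k/n_{k}\to 0$ into $d(S)=0$. This is the only step that requires a small observation, but it is not really an obstacle: for any $n\ge n_{1}$, let $k=k(n)=|S\cap\{1,\dots,n\}|$, so that $n_{k}\le n<n_{k+1}$; then
\[
\frac{|S\cap\{1,\dots,n\}|}{n}=\frac{k}{n}\le\frac{k}{n_{k}}.
\]
Since $S$ is infinite, $k(n)\to\infty$ as $n\to\infty$, so the right-hand side tends to $0$. This yields $d(S)=0$, completing the proof.

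The main (minor) subtlety is justifying the final step cleanly, i.e.\ noting that the ratio $|S\cap\{1,\dots,n\}|/n$ attains its local maxima at points of $S$, so that controlling it along the subsequence $(n_{k})$ controls it everywhere. Everything else is a direct application of Olivier's Theorem to the subsequence $(a_{n_{k}})_{k}$, together with the pointwise lower bound $a_{n_{k}}\ge\alpha/n_{k}$ supplied by the hypothesis.
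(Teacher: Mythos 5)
Your proof is correct and is essentially the same as the paper's: both enumerate $S$, apply Olivier's Theorem to the nonincreasing subsequence $(a_{n_k})_k$ to get $k\,a_{n_k}\to 0$, use the bound $n_k a_{n_k}\ge\alpha$ to deduce $k/n_k\to 0$, and then pass to the full counting function via $|S\cap\{1,\dots,n\}|/n\le k/n_k$. Your reading of $c_n$ as a typo for $a_n$ also matches the paper's intent.
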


\begin{proof}
According to our hypotheses, the elements of $S\ $can be counted as
$k_{1}<k_{2}<k_{3}<....$ Since%
\[
0<\frac{n}{k_{n}}=\frac{na_{k_{n}}}{k_{n}a_{k_{n}}}\leq\frac{1}{\alpha
}na_{k_{n}},
\]
we infer from Theorem 1 that $\lim_{n\rightarrow\infty}\frac{n}{_{k_{n}}}=0.$
Then%
\[
\frac{\left\vert S\cap\{1,...,n)\right\vert }{n}=\frac{p}{n}=\frac{\left\vert
S\cap\{1,...,k_{p})\right\vert }{k_{p}}\leq\frac{p}{k_{p}}\rightarrow0,
\]
whence
\[
d(S)=\lim_{n\rightarrow\infty}\frac{\left\vert S\cap\{1,...,n)\right\vert }%
{n}=0.
\]

\end{proof}

\noindent\emph{Proof of Theorem} \ref{STgen}. For $\varepsilon>0$ arbitrarily
fixed we denote%
\[
S_{\varepsilon}=\left\{  n:a_{n}b_{n}\geq\varepsilon\right\}  .
\]

Then
\[
\infty>\sum\nolimits_{n\in S_{\varepsilon}}a_{n}\geq\sum\nolimits_{n\in
S_{\varepsilon}}\frac{1}{b_{n}},
\]
whence by Lemma \ref{Lemtech} it follows that $S_{\varepsilon}$ has zero lower
density. Therefore $(\underline{d})$-$\lim_{n\rightarrow\infty}a_{n}b_{n}=0$.
When $\inf_{n}\frac{n}{b_{n}}=\alpha>0,$ then%
\[
\infty>\sum\nolimits_{n\in S_{\varepsilon}}\frac{1}{b_{n}}\geq\alpha
\sum\nolimits_{n\in S_{\varepsilon}}\frac{1}{n}%
\]
so by Lemma \ref{LemmaUppDens} we infer that $S_{\varepsilon}$ has zero
density. In this case, $(d)$-$\lim_{n\rightarrow\infty}a_{n}b_{n}=0$%
.\quad$\square$

\section{Convergence associated to higher order densities}

The convergence in lower density is very weak. A better way to formulate
higher order \v{S}al\'{a}t-Toma type criteria is to consider the convergence
in harmonic density. We will illustrate this idea by proving a non-monotonic
version of Remark 1.

The harmonic density $d_{h}$ is defined by the formula%
\[
d_{h}(A)=\lim_{n\rightarrow\infty}\frac{1}{\ln n}\sum_{k=1}^{n}\frac{\chi
_{A}(k)}{k},
\]
and the limit in harmonic density, $(d_{h})$-$\lim_{n\rightarrow\infty}%
a_{n}=\ell,$ means that each of the sets $\left\{  n:\left\vert a_{n}%
-\ell\right\vert \geq\varepsilon\right\}  $ has zero harmonic density,
whenever $\varepsilon>0$. Since%
\[
d(A)=0\text{ implies }d_{h}(A)=0,
\]
(see \cite{HR}, Lemma 1, p. 241), it follows that the existence of limit in
density assures the existence of limit in harmonic density.

The harmonic density has a nice application to Benford's law, which states
that in lists of numbers from many real-life sources of data the leading digit
is distributed in a specific, non-uniform way. See \cite{Di} for more details.

\begin{theorem}
\label{ThmSTlog}If $\sum a_{n}$ is a convergent positive series, then%
\[
(d_{h})\text{-}\lim_{n\rightarrow\infty}\left(  n\ln n\right)  a_{n}=0.
\]

\end{theorem}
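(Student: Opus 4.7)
The plan is to follow the template of the proof of Theorem~\ref{STgen}: fix $\varepsilon>0$, form the exceptional set
$$S_\varepsilon=\{n\geq 2:(n\ln n)a_n\geq \varepsilon\},$$
and prove directly that $d_h(S_\varepsilon)=0$. Since $(n\ln n)a_n\geq \varepsilon$ on $S_\varepsilon$, convergence of $\sum a_n$ immediately gives
$$\sum_{n\in S_\varepsilon}\frac{1}{n\ln n}\leq \frac{1}{\varepsilon}\sum_{n\in S_\varepsilon}a_n<\infty.$$
So the whole theorem reduces to the following analogue of Lemma~\ref{LemmaUppDens}: \emph{if $S\subseteq\{2,3,\ldots\}$ satisfies $\sum_{n\in S}\frac{1}{n\ln n}<\infty$, then $d_h(S)=0$.} This is the natural ``harmonic-density'' counterpart: instead of ``$\sum_{n\in S}\tfrac{1}{n}<\infty\Rightarrow d(S)=0$'', we are replacing the weights $1/n$ (whose partial sums $\sim \ln n$ control ordinary density) by the weights $1/(n\ln n)$ (whose partial sums $\sim \ln\ln n$ are the natural size for harmonic density).

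To prove this key lemma, I would use Abel (partial) summation in order to trade a $\ln k$ between the two weight systems. Set $c_k=\chi_S(k)/(k\ln k)$ and $C_n=\sum_{k=2}^n c_k$, so by hypothesis $C_n\to C<\infty$. The harmonic-density expression is
$$\frac{1}{\ln n}\sum_{\substack{k\in S\\ 2\leq k\leq n}}\frac{1}{k}=\frac{1}{\ln n}\sum_{k=2}^n c_k\ln k.$$
Applying summation by parts to the right-hand side yields
$$\frac{1}{\ln n}\sum_{k=2}^n c_k\ln k=C_n-\frac{1}{\ln n}\sum_{k=2}^{n-1}C_k\ln\!\Bigl(1+\frac{1}{k}\Bigr).$$
Now $w_k:=\ln(1+1/k)\geq 0$ and $\sum_{k=2}^{n-1}w_k=\ln n-\ln 2\sim \ln n$, so the weighted-Cesàro (Silverman–Toeplitz) theorem applied to the convergent sequence $C_k\to C$ gives $\frac{1}{\ln n}\sum_{k=2}^{n-1}C_k w_k\to C$. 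Hence the entire expression tends to $C-C=0$, which is exactly $d_h(S)=0$.

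The main obstacle is simply identifying the correct partial-summation that exposes the cancellation: one wants the weights $1/(k\ln k)$ (tamed by convergence of the tail) to be separated cleanly from the multiplicative factor $\ln k$ that upgrades them to $1/k$, and Abel summation accomplishes this. After that, the finiteness of $C=\lim C_n$ ensures that the Silverman–Toeplitz hypothesis is met, and no delicate bounds on how fast $C_n\to C$ are needed. I expect no further surprises; standard estimates like $\ln(1+1/k)=1/k+O(1/k^2)$ are available but not actually required for the argument, since only $\sum_{k=2}^{n-1}w_k\sim \ln n$ matters.
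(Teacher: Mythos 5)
Your proposal is correct, and its first half coincides exactly with the paper's proof: both reduce the theorem, via the set $S_{\varepsilon}=\{n:(n\ln n)a_{n}\geq\varepsilon\}$ and the comparison $\varepsilon\sum_{n\in S_{\varepsilon}}\frac{1}{n\ln n}\leq\sum_{n\in S_{\varepsilon}}a_{n}<\infty$, to the key lemma that any set $S$ with $\sum_{n\in S}\frac{1}{n\ln n}<\infty$ has zero harmonic density. Where you genuinely diverge is in the proof of that lemma. The paper states it as an analogue of Lemma \ref{LemmaUppDens} (for sequences $(b_{n})$ with $(nb_{n})$ decreasing and $\inf(n\ln n)b_{n}>0$) and omits the details, indicating that they parallel the counting argument of Lemma \ref{LemmaUppDens}, which enumerates $S$ as $k_{1}<k_{2}<\cdots$ and invokes Olivier's Theorem \ref{ThmO}. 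You instead prove the lemma outright by Abel summation: with $c_{k}=\chi_{S}(k)/(k\ln k)$ and $C_{n}=\sum_{k=2}^{n}c_{k}\to C<\infty$, the identity
\[
\frac{1}{\ln n}\sum_{k=2}^{n}c_{k}\ln k=C_{n}-\frac{1}{\ln n}\sum_{k=2}^{n-1}C_{k}\ln\Bigl(1+\frac{1}{k}\Bigr)
\]
is correct, the second term tends to $C$ because the nonnegative weights $\ln(1+1/k)$ sum to $\ln n-\ln 2$ (so the weighted means form a regular summability method), and the difference tends to $C-C=0$. This is a different mechanism for the crucial step, and it buys something: the argument is complete and self-contained where the paper's is only sketched, it needs no monotonicity hypothesis, and it transparently generalizes to higher-order harmonic densities by trading $\ln k$ for iterated logarithms in the same summation by parts. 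The only points to note are cosmetic: $1\notin S_{\varepsilon}$ automatically since $\ln 1=0$, and the $k=1$ term of the harmonic-density sum contributes only $O(1/\ln n)$, both of which your setup implicitly handles.
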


\begin{proof}
We start by noticing the following analogue of Lemma \ref{LemmaUppDens}: If
$(b_{n})_{n}$ is a positive sequence such that $(nb_{n})_{n}$ is decreasing
and
\[
\inf\left(  n\ln n\right)  b_{n}=\alpha>0,
\]
then every subset $S$ of $\mathbb{N}$ for which $\sum\nolimits_{n\in S}%
b_{n}<\infty$ has zero harmonic density.

To prove this assertion, it suffices to consider the case where $S$ is
infinite and to show that%
\begin{equation}
\lim_{x\rightarrow\infty}\left(  \sum\nolimits_{k\in S\cap\left\{
1,...,n\right\}  }\frac{1}{k}\right)  nb_{n}=0. \tag{$H$}\label{**}%
\end{equation}

The details are very similar to those used in Lemma \ref{LemmaUppDens}, and
thus they are omitted.

Having (\ref{**}) at hand, the proof of Theorem \ref{ThmSTlog} can be
completed by considering for each $\varepsilon>0$ the set
\[
S_{\varepsilon}=\left\{  n:\left(  n\ln n\right)  a_{n}\geq\varepsilon
\right\}  .
\]
Since
\[
\varepsilon\sum\nolimits_{n\in S_{\varepsilon}}\frac{1}{n\ln n}\leq
\sum\nolimits_{n\in S_{\varepsilon}}a_{n}<\infty,
\]
by the aforementioned analogue of Lemma \ref{LemmaUppDens} applied to
$b_{n}=1/\left(  n\ln n\right)  $ we infer that $S_{\varepsilon}$ has zero
harmonic density. Consequently ($d_{h}$)-$\lim_{x\rightarrow\infty}\left(
n\ln n\right)  a_{n}=0,$ and the proof is done.
\end{proof}

An integral version of the previous theorem can be found in \cite{NP} and
\cite{NP2}.

One might think that the fulfilment of a sequence of conditions like
$(D_{k}),$ for all $k\in\mathbb{N},$ (or something similar) using other
series, is strong enough to force the convergence of a positive series $\sum
a_{n}.$ That this is not the case was shown by Paul Du Bois-Raymond \cite{BR}
(see also \cite{Knopp}, Ch. IX, Section 41) who proved that for every sequence
of divergent positive series, each divergent essentially slower than the
previous one, it is possible to construct a series diverging slower than all
of them.

Under these circumstances the following problem seems of utmost interest:

\begin{problem}
Find an algorithm to determine whether a positive series is convergent or not.
\end{problem}

\section{The relevance of the harmonic series}

Surprisingly, the study of the nature of positive series is very close to that
of subseries of the harmonic series $\sum\frac{1}{n}.$

\begin{lemma}
\label{Lemmainteger}If $(a_{n})_{n}$ is a unbounded sequence of real numbers
belonging to $[1,\infty)$, then the series $\sum\frac{1}{a_{n}}$ and
$\sum\frac{1}{\lfloor a_{n}\rfloor}$ have the same nature.
\end{lemma}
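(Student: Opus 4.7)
The plan is to reduce the statement to a direct comparison test, exploiting the two-sided inequality between $a_n$ and $\lfloor a_n\rfloor$ that is available precisely because $a_n\geq 1$.

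First I would observe the elementary bounds
\[
\lfloor a_n\rfloor\,\leq\,a_n\,<\,\lfloor a_n\rfloor+1.
\]
Since $a_n\in[1,\infty)$ we have $\lfloor a_n\rfloor\geq 1$, hence $\lfloor a_n\rfloor+1\leq 2\lfloor a_n\rfloor$, and so
\[
\frac{1}{2\lfloor a_n\rfloor}\,<\,\frac{1}{a_n}\,\leq\,\frac{1}{\lfloor a_n\rfloor}.
\]
This is the only analytic input needed; everything else is a routine application of the comparison test for positive series.

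Next, using the right-hand inequality, convergence of $\sum\frac{1}{\lfloor a_n\rfloor}$ forces convergence of $\sum\frac{1}{a_n}$; using the left-hand inequality, convergence of $\sum\frac{1}{a_n}$ forces convergence of $\sum\frac{1}{2\lfloor a_n\rfloor}$, hence of $\sum\frac{1}{\lfloor a_n\rfloor}$. Thus the two series have the same nature, which is the desired conclusion.

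There is really no substantive obstacle here: the whole content of the lemma is the observation that $\lfloor a_n\rfloor+1\leq 2\lfloor a_n\rfloor$ when $\lfloor a_n\rfloor\geq 1$, which turns integer truncation into a bounded multiplicative perturbation of the reciprocal. The unboundedness hypothesis on $(a_n)$ plays no role in this argument; it merely excludes the trivial case in which both series diverge because their general terms do not tend to $0$.
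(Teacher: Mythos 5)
Your proof is correct, and the easy direction (convergence of $\sum\frac{1}{\lfloor a_{n}\rfloor}$ implies that of $\sum\frac{1}{a_{n}}$, via $a_{n}\geq\lfloor a_{n}\rfloor$) is identical to the paper's. In the nontrivial direction you diverge slightly: the paper compares $\frac{1}{a_{n}}$ with $\frac{1}{\lfloor a_{n}\rfloor+1}$ and with $\frac{1}{\lfloor a_{n}\rfloor(\lfloor a_{n}\rfloor+1)}$, then reassembles $\frac{1}{\lfloor a_{n}\rfloor}$ through the exact partial-fraction identity $\frac{1}{m}=\frac{1}{m+1}+\frac{1}{m(m+1)}$, whereas you shortcut this with the multiplicative bound $\lfloor a_{n}\rfloor+1\leq2\lfloor a_{n}\rfloor$ (valid since $\lfloor a_{n}\rfloor\geq1$), giving $\frac{1}{\lfloor a_{n}\rfloor}<\frac{2}{a_{n}}$ in one step. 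Your version is a little more economical and makes transparent that truncation is only a bounded multiplicative perturbation; the paper's version avoids introducing the constant $2$ by using an exact decomposition, but needs an extra auxiliary series. Your closing remark is also accurate: neither argument uses the unboundedness of $(a_{n})_{n}$, which only serves to rule out the degenerate case where both series diverge for trivial reasons.
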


\begin{proof}
The convergence of the series $\sum\frac{1}{a_{n}}$ follows from the
convergence of the series $\sum\frac{1}{\lfloor a_{n}\rfloor},$ by the
comparison test. In fact, $a_{n}\geq\lfloor a_{n}\rfloor,$ whence $\frac
{1}{\lfloor a_{n}\rfloor}\geq\frac{1}{a_{n}}.$

Conversely, if the series $\sum\frac{1}{a_{n}}$ is convergent, then so are the
series $\sum\frac{1}{\lfloor a_{n}\rfloor+1}$ and $\sum\frac{1}{\lfloor
a_{n}\rfloor\left(  \lfloor a_{n}\rfloor+1\right)  }.$ This is a consequence
of the inequalities%
\[
\frac{1}{\lfloor a_{n}\rfloor\left(  \lfloor a_{n}\rfloor+1\right)  }\leq
\frac{1}{\lfloor a_{n}\rfloor+1}\leq\frac{1}{a_{n}},
\]
and the comparison test. Since%
\[
\frac{1}{\lfloor a_{n}\rfloor}=\frac{1}{\lfloor a_{n}\rfloor+1}+\frac
{1}{\lfloor a_{n}\rfloor\left(  \lfloor a_{n}\rfloor+1\right)  }%
\]
we conclude that the series $\sum\frac{1}{\lfloor a_{n}\rfloor}$ is convergent too.
\end{proof}

By combining Lemma \ref{LemmaUppDens} and Lemma \ref{Lemmainteger} we infer
the following result:

\begin{corollary}
\label{Corrdens}If $(a_{n})_{n}$ is a sequence of positive numbers whose
integer parts form a set of positive upper density, then the series $\sum
\frac{1}{a_{n}}$ is divergent.
\end{corollary}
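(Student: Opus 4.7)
The plan is to chain Lemma \ref{Lemmainteger} (which equates the natures of $\sum 1/a_n$ and $\sum 1/\lfloor a_n\rfloor$) with the contrapositive of Lemma \ref{LemmaUppDens} (which says that a subset of $\mathbb N$ on which the reciprocal harmonic subseries converges must have zero density). The bridge between the two is the set $T=\{\lfloor a_n\rfloor:n\in\mathbb N\}$, which is exactly the object we have been handed a hypothesis about.

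First I would reduce to the case $a_n\geq 1$ for every $n$. If $a_n<1$ for infinitely many indices, then those terms alone make $\sum 1/a_n$ divergent and there is nothing to prove. Otherwise $T\subseteq\mathbb N$, and the hypothesis $\bar d(T)>0$ guarantees $T$ is infinite, whence $(a_n)_n$ is unbounded and Lemma \ref{Lemmainteger} applies.

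Next I would establish $\sum_{k\in T}1/k=\infty$ by invoking Lemma \ref{LemmaUppDens} with $S=T$ and the decreasing sequence $k\mapsto 1/k$: since $\inf_{k\in T}k\cdot(1/k)=1>0$, convergence of $\sum_{k\in T}1/k$ would force $d(T)=0$, hence $\bar d(T)=0$ (the iterated density collapses because $0\leq\underline d(T)\leq\bar d(T)$), contradicting the hypothesis. To transfer this to the original series, for each $k\in T$ I pick one index $n_k$ with $\lfloor a_{n_k}\rfloor=k$; the $n_k$'s are pairwise distinct, so
\[
\sum_{n}\frac{1}{\lfloor a_n\rfloor}\;\geq\;\sum_{k\in T}\frac{1}{\lfloor a_{n_k}\rfloor}\;=\;\sum_{k\in T}\frac{1}{k}\;=\;\infty,
\]
and a final invocation of Lemma \ref{Lemmainteger} carries this divergence back to $\sum 1/a_n$.

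There is no genuine obstacle; the argument is essentially a bookkeeping exercise. The only step meriting attention is the contrapositive reading of Lemma \ref{LemmaUppDens}, which rests on the trivial equivalence $d(T)=0\iff\bar d(T)=0$. One should also remember that $T$ is a \emph{set} (not a multiset): multiplicities in the sequence $(\lfloor a_n\rfloor)_n$ can only enlarge the sum, which is all that the displayed inequality requires.
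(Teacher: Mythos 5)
Your proof is correct and follows exactly the route the paper intends: the paper states the corollary ``by combining Lemma \ref{LemmaUppDens} and Lemma \ref{Lemmainteger}'' without writing out details, and your argument is precisely that combination, with the reduction to $a_n\geq 1$, the unboundedness check, and the set-versus-multiset bookkeeping correctly supplied.
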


The converse of Corollary \ref{Corrdens} is not true. A counterexample is
provided by the series $\sum_{p=\text{prime}}\frac{1}{p},$ of inverses of
prime numbers, which is divergent (see \cite{A1997} or \cite{Ey1980} for a
short argument). According to an old result due to Chebyshev, if
$\pi(n)=\left\vert \left\{  p\leq n:p\text{ prime}\right\}  \right\vert ,$
then
\[
\frac{7}{8}<\frac{\pi(n)}{n/\ln n}<\frac{9}{8}%
\]
and thus the set of prime numbers has zero density.

The following estimates of the $k$th prime number,%
\[
k\left(  \ln k+\ln\ln k-1\right)  \leq p_{k}\leq k\left(  \ln k+\ln\ln
k\right)  \text{\quad for }k\geq6,
\]
which are made available by a recent paper of P. Dusart \cite{Dus}, shows that
the speed of divergence of the series $\sum_{p=\text{prime}}\frac{1}{p}$ is
comparable with that of $\sum\frac{1}{k\left(  \ln k+\ln\ln k\right)  }.$

Lemma \ref{Lemmainteger} suggests that the nature of positive series
$\sum\frac{1}{a_{n}}$ could be related to some combinatorial properties of the
sequence $(\lfloor a_{n}\rfloor)_{n}$ (of natural numbers).

\begin{problem}
Given an increasing function $\varphi:\mathbb{N\rightarrow}(0,\infty)$ with
$\lim_{n\rightarrow\infty}\varphi(n)=\infty,$ we define the upper density of
weight $\varphi$ by the formula%
\[
\bar{d}_{\varphi}(A)=\underset{n\rightarrow\infty}{\lim\sup}\frac{\left\vert
A\cap\lbrack1,n]\right\vert }{\varphi(n)}.
\]

Does every subset $A\subset\mathbb{N}$ with $\bar{d}_{n/\ln n}(A)>0$ generate
a divergent subseries $\sum_{n\in A}\frac{1}{n}$ of the harmonic series?

What about the case of other weights
\[
n/[\left(  \ln n\right)  \left(  \ln\ln n\right)  \cdots(\underset{k\text{
times}}{\underbrace{\ln\ln\cdots\ln n}})]?
\]

\end{problem}

This problem seems important in connection with the following long-standing
conjecture due to P. Erd\"{o}s:

\begin{conjecture}
$($P. Erd\"{o}s$)$. If the sum of reciprocals of a set $A$ of integers
diverges, then that set contains arbitrarily long arithmetic progressions.
\end{conjecture}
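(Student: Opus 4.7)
The plan is to acknowledge upfront that this is a long-standing open conjecture of Erd\H{o}s, so any honest ``proof'' can only be a strategy outline patterned on known partial progress. I would first split on the upper density $\bar d(A)$. If $\bar d(A)>0$, Szemer\'edi's theorem (1975) delivers arithmetic progressions of every length directly, and the divergence hypothesis $\sum_{n\in A}1/n=\infty$ plays no role. The nontrivial regime is $\bar d(A)=0$, where $A$ is sparse but its reciprocals still sum to infinity; the primes are the paradigmatic example and motivate everything that follows.

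For this sparse regime my plan is to follow the Green--Tao transference strategy. The central step is to manufacture a pseudorandom majorant $\nu:\mathbb{N}\to[0,\infty)$ supported essentially on $A$ (or on a mild enlargement of $A$), satisfying the linear-forms and correlation conditions required by a relative Szemer\'edi theorem. Given such a $\nu$, the relative Szemer\'edi theorem of Green--Tao --- or its simplification by Conlon, Fox and Zhao --- would produce $k$-term arithmetic progressions inside the positive-$\nu$-density portion of $A$, for each $k\in\mathbb{N}$. A preparatory quantitative step is to convert the hypothesis $\sum_{n\in A}1/n=\infty$ into lower bounds on $|A\cap[1,N]|$ along an unbounded sequence of scales $N$; in the harmonic language of Section 4, divergence prevents $A$ from being too thin on too many scales at once, and this is what the density-increment machinery needs to get started.

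The main obstacle, and the reason the conjecture remains open after more than half a century, is exactly the construction of $\nu$ from the bare divergence hypothesis. For the primes, Green and Tao build $\nu$ from sieve weights whose pseudorandomness is verified via external input (the Selberg sieve together with Goldston--Pintz--Y{\i}ld{\i}r{\i}m-type estimates); no analogue of this structural input is available for an arbitrary set $A$ with $\sum_{n\in A}1/n=\infty$. A potentially more approachable intermediate target, very much in the spirit of the preceding Problem, is to prove the conjecture first under the stronger hypothesis $\bar d_{n/\ln n}(A)>0$: by Chebyshev's estimate $\pi(n)\asymp n/\ln n$ this already contains the Green--Tao theorem as a special case, and its iterated versions with weights $n/[(\ln n)(\ln\ln n)\cdots]$ would then chase divergence through the Abelian scale of Lemma \ref{LemmaAbel2}. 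I expect this reduction --- identifying the correct density notion and a Szemer\'edi-type theorem relative to it --- rather than any subsequent application, to be where a realistic attack on the full conjecture would founder.
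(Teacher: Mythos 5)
This statement is a \emph{conjecture}, not a theorem: the paper offers no proof, and indeed explicitly remarks immediately afterwards that the question is open even for a single progression of length three. So there is no ``paper's own proof'' to compare against, and your proposal --- which openly declares itself a strategy outline rather than a proof --- is in the only honest position available. What you have written is an accurate map of the known partial progress, and it coincides with exactly the two special cases the paper itself cites: Szemer\'edi's theorem disposes of the regime $\bar d(A)>0$ (where, as you note, the divergence hypothesis is not even needed), and the Green--Tao transference principle handles the primes and their relatively dense subsets. Your proposed intermediate target, proving the conjecture under the stronger hypothesis $\bar d_{n/\ln n}(A)>0$, is essentially the Problem the authors pose just before stating the conjecture, so you have independently rediscovered the reduction they consider ``of utmost interest.''

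The genuine gap is the one you yourself name: the transference strategy requires a pseudorandom majorant $\nu$ adapted to $A$, and for the primes this is built from sieve weights whose linear-forms and correlation conditions are verified using arithmetic structure that an arbitrary set with divergent reciprocal sum simply does not possess. The bare hypothesis $\sum_{n\in A}1/n=\infty$ yields only that $|A\cap[1,N]|\geq N^{1-o(1)}$ along some unbounded sequence of scales, and no relative Szemer\'edi theorem is known at that level of sparsity without pseudorandomness input. Nothing in your outline bridges this, and nothing in the paper does either. To be clear for the record: your text should not be spliced in as a proof of the conjecture; it is a correct account of why no proof currently exists.
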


This conjecture is still open even if one only seeks a single progression of
length three. However, in the special case where the set $A$ has positive
upper density, a positive answered was provided by E. Szemer\'{e}di \cite{Sz}
in 1975. Recently, Green and T. Tao \cite{GT2004} proved Erd\"{o}s' Conjecture
in the case where $A$ is the set of prime numbers, or a relatively dense
subset thereof.

\begin{theorem}
\label{P2}Assuming the truth of Erd\"{o}s' conjecture, any unbounded sequence
$(a_{n})_{n}$ of positive numbers whose sum of reciprocals $\sum_{n}\frac
{1}{a_{n}}$ is divergent must contain arbitrarily long $\varepsilon
$-progressions, for any $\varepsilon>0$.
\end{theorem}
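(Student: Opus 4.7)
The plan is to discretize the problem at scale $\varepsilon$ and reduce it to Erd\"{o}s' conjecture for sets of positive integers. Fix $\varepsilon > 0$, set $b_n = \lfloor a_n/\varepsilon \rfloor$, and let $m_k = |\{n : b_n = k\}|$ count how many terms fall in the bucket $[\varepsilon k, \varepsilon(k+1))$. Since $\frac{1}{b_n} \geq \frac{\varepsilon}{a_n}$ whenever $a_n \geq \varepsilon$, the divergence of $\sum \frac{1}{a_n}$ forces
\[
\sum_{k\geq 1}\frac{m_k}{k} \;=\; \sum_{a_n\geq\varepsilon}\frac{1}{b_n} \;=\; \infty,
\]
the indices with $a_n < \varepsilon$ being harmless (they contribute only to $m_0$).

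I would then split on the multiplicities. If $\sup_k m_k = \infty$, for every target length $L$ some bucket $[\varepsilon k_0, \varepsilon(k_0+1))$ contains at least $L$ terms of the sequence, which pairwise differ by less than $\varepsilon$; taking them as $a_{n_1}, \ldots, a_{n_L}$ yields an $\varepsilon$-progression of length $L$ with common difference $0$ (the same pigeonhole disposes of the subcase $m_0 = \infty$). Otherwise $m_k \leq M$ uniformly in $k$, and the set $A = \{k \geq 1 : m_k \geq 1\}$ satisfies
\[
\sum_{k\in A}\frac{1}{k} \;\geq\; \frac{1}{M}\sum_{k\geq 1}\frac{m_k}{k} \;=\; \infty.
\]
Erd\"{o}s' conjecture, applied to $A \subset \mathbb{N}$, delivers an honest arithmetic progression $k_1 < k_2 < \cdots < k_L$ in $A$ with common difference $d$. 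Picking any $n_j$ with $b_{n_j} = k_j$, the containment $a_{n_j} \in [\varepsilon k_j, \varepsilon(k_j+1))$ yields $|a_{n_{j+1}} - a_{n_j} - \varepsilon d| < \varepsilon$, so these values form an $\varepsilon$-progression of length $L$ with common difference $\varepsilon d$ in $(a_n)$.

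The main obstacle is of course Erd\"{o}s' conjecture itself, which is imported as a hypothesis. The only residual friction is definitional: if an $\varepsilon$-progression is required to respect the index order of the parent sequence, then the witnesses $n_j$ above must be chosen more carefully. Since multiplicities in the second case are bounded by $M$, this is handled by reapplying Erd\"{o}s' conjecture to the tail sets $\{k \in A : k \geq N\}$ (whose sums of reciprocals remain divergent after removing finitely many elements), thereby pushing the witness indices past any prescribed threshold, and then iterating to secure an index-ordered selection.
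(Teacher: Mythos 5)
The paper states Theorem \ref{P2} without proof, so there is nothing to compare against line by line; judged on its own, your argument is correct and is surely the intended route (rescale by $\varepsilon$, pass to integer parts as in Lemma \ref{Lemmainteger}, and invoke Erd\"{o}s' conjecture). The genuine content you supply, which the paper's setup glosses over, is the passage from the \emph{multiset} $\{b_n\}$ to the \emph{set} $A$: Erd\"{o}s' conjecture concerns sets, and $\sum_n 1/b_n=\infty$ does not by itself give $\sum_{k\in A}1/k=\infty$. Your dichotomy handles this cleanly -- unbounded multiplicities (including $m_0=\infty$, which also absorbs the case of infinitely many $a_n<\varepsilon$) yield arbitrarily long $\varepsilon$-progressions with $r=0$, while bounded multiplicities give $\sum_{k\in A}1/k\geq\frac{1}{M}\sum_k m_k/k=\infty$ -- and the final estimate $\lvert a_{n_j}-\varepsilon k_j\rvert<\varepsilon$ does produce an $\varepsilon$-progression with common difference $\varepsilon d$ under the paper's definition. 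One soft spot: your proposed repair for index-ordering is not right as stated, since applying Erd\"{o}s' conjecture to the tail $\{k\in A:k\geq N\}$ pushes the \emph{values} $k_j$ past $N$ but gives no control on the \emph{indices} $n_j$ (a large value of $b_n$ can occur at a small index $n$). Fortunately this is moot: the paper defines an $\varepsilon$-progression as ``any string $c_1,\dots,c_n$'' of values satisfying $\lvert c_k-a-kr\rvert<\varepsilon$, with no requirement that the witnesses appear in index order, so your main argument already suffices and the final paragraph can simply be dropped.
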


By an $\varepsilon$-progression of length $n$ we mean any string
$c_{1},...,c_{n}$ such that%
\[
\left\vert c_{k}-a-kr\right\vert <\varepsilon
\]
for suitable $a,r\in\mathbb{R}$ and all $k=1,...,n.$

The converse of Theorem \ref{P2} is not true. A counterexample is provided by
the convergent series $\sum_{n=1}^{\infty}\left(  \frac{1}{10^{n}+1}%
+\cdots+\frac{1}{10^{n}+n}\right)  .$

It seems to us that what is relevant in the matter of convergence is not only
the existence of some progressions but the number of them. We believe not only
that the divergent subseries of the harmonic series have progressions of
arbitrary length but that they have a huge number of such progressions and of
arbitrarily large common differences. Notice that the counterexample above
contains only progressions of common difference 1 (or subprogressions of
them). Hardy and Littlewood's famous paper \cite{HL} advanced the hypothesis
that the number of progressions of length $k$ is asymptotically of the form
$C_{k}n^{2}/\ln^{k}n$, for some constant $C_{k}$.

\end{document}